 \newtheorem*{corollary*}{Corollary}
 \newtheorem*{construction*}{Construction}
 \newtheorem*{definition*}{Definition}
 \newtheorem*{notation*}{Notation}
 \newtheorem*{lemma*}{Lemma}
 \newtheorem*{theorem*}{Theorem}
 \newtheorem*{remark*}{Remark}
 \newtheorem*{example*}{Example}
 \newtheorem*{conjecture*}{Conjecture}
 \newtheorem*{condition*}{Condition}
 \newtheorem*{result*}{Result}
 \newtheorem*{property*}{Property}
 \newtheorem*{cor*}{Corollary}
 \newtheorem*{const*}{Construction}
 \newtheorem*{defn*}{Definition}
 \newtheorem*{notn*}{Notation}
 \newtheorem*{lem*}{Lemma}
 \newtheorem*{thm*}{Theorem}
 \newtheorem*{rem*}{Remark}
 \newtheorem*{exm*}{Example}
 \newtheorem*{conj*}{Conjecture}
 \newtheorem{lemma}{Lemma}[section]
 \newtheorem{remark}[lemma]{Remark}
 \newtheorem{theorem}[lemma]{Theorem}
 \newtheorem{notation}[lemma]{Notation}
 \newtheorem{thm}[lemma]{Theorem}
 \newtheorem{prop}[lemma]{Proposition}
 \newtheorem{lem}[lemma]{Lemma}
 \newtheorem{defn}[lemma]{Definition}
 \newtheorem{notn}[lemma]{Notation}
 \newtheorem{cor}[lemma]{Corollary}
 \newtheorem{introtheorem}{Theorem}
 \crefname{introtheorem}{theorem}{theorems}
 \Crefname{introtheorem}{Theorem}{Theorems}
  \newtheorem{introthm}[introtheorem]{Theorem}
   \crefname{introthm}{theorem}{theorems}
 \Crefname{introthm}{Theorem}{Theorems}
  \crefname{introcorollary}{corollary}{corollaries}
 \Crefname{introcorollary}{Corollary}{Corollaries}
   \crefname{introcor}{corollary}{corollaries}
 \Crefname{introcor}{Corollary}{Corollaries}
   \crefname{introconjecture}{conjectures}{conjectures}
 \Crefname{introconjecture}{Conjecture}{Conjectures}
    \crefname{introconj}{conjectures}{conjectures}
 \Crefname{introconj}{Conjecture}{Conjectures}
     \crefname{introlem}{lemma}{lemmas}
 \Crefname{introlem}{Lemma}{Lemmas}
 \crefname{introremark}{remark}{remarks}
 \Crefname{introremark}{Remark}{Remarks}
  \crefname{introrem}{remark}{remarks}
 \Crefname{introrem}{Remark}{Remarks}
   \crefname{introprop}{Proposition}{Propositions}
 \Crefname{introprop}{Proposition}{Propositions}
   \crefname{introdefn}{definition}{definitions}
 \Crefname{introdefn}{Definition}{Definitions}
   \crefname{intronotn}{notation}{notations}
 \Crefname{intronotn}{Notation}{Notations}
   \crefname{introtask}{task}{tasks}
 \Crefname{introtask}{Task}{Tasks}
  \crefname{introprob}{problem}{problems}
 \Crefname{introprob}{Problem}{Problems}
   \crefname{introquestion}{question}{questions}
 \Crefname{introquestion}{Question}{Questions}
   \crefname{introexm}{example}{example}
 \Crefname{introquestion}{Example}{Example}
 \crefname{theorem}{theorem}{theorems}
 \Crefname{theorem}{Theorem}{Theorems}
  \crefname{thm}{theorem}{theorems}
 \Crefname{thm}{Theorem}{Theorems}
  \crefname{corollary}{Corollary}{Corollaries}
 \Crefname{corollary}{Corollary}{Corollaries}
   \crefname{cor}{Corollary}{Corollaries}
 \Crefname{cor}{Corollary}{Corollaries}
   \crefname{conjecture}{conjectures}{conjectures}
 \Crefname{conjecture}{Conjecture}{Conjectures}
    \crefname{conj}{conjectures}{conjectures}
 \Crefname{conj}{Conjecture}{Conjectures}
     \crefname{lem}{lemma}{lemmas}
 \Crefname{lem}{Lemma}{Lemmas}
      \crefname{lemma}{Lemma}{Lemmas}
 \Crefname{lemma}{Lemma}{Lemmas}
 \crefname{remark}{remark}{remarks}
 \Crefname{remark}{Remark}{Remarks}
  \crefname{rem}{remark}{remarks}
 \Crefname{rem}{Remark}{Remarks}
   \crefname{rem}{remark}{remarks}
 \Crefname{rem}{Remark}{Remarks}
   \crefname{proposition}{Proposition}{Proposition}
 \Crefname{proposition}{Proposition}{Proposition}
    \crefname{prop}{Proposition}{Propositions}
 \Crefname{prop}{Proposition}{Propositions}
   \crefname{defn}{definition}{definitions}
 \Crefname{defn}{Definition}{Definitions}
   \crefname{notn}{notation}{notations}
 \Crefname{notn}{Notation}{Notations}
   \crefname{task}{task}{tasks}
 \Crefname{task}{Task}{Tasks}
  \crefname{prob}{problem}{problems}
 \Crefname{prob}{Problem}{Problems}
   \crefname{question}{question}{questions}
 \Crefname{question}{Question}{Questions}
\newcommand{\Ind}{\operatorname{Ind}}
\newcommand{\ind}{\operatorname{ind}}
\newcommand{\ch}{\operatorname{char}}
\newcommand{\GL}{\operatorname{GL}}
\newcommand{\sll}{{\mathfrak{sl}}}
\newcommand{\WF}{\operatorname{WF}}
\newcommand{\irr}{\operatorname{irr}}
\newcommand{\bN}{\mathbb{N}}
\newcommand{\bfG}{\mathbf{G}}
\newcommand{\bfM}{\mathbf{M}}
\newcommand{\bfP}{\mathbf{P}}
\newcommand{\R}{\mathbb{R}}
\providecommand{\fg}{\mathfrak{g}}
\providecommand{\fm}{\mathfrak{m}}
\providecommand{\cL}{\mathcal{L}}
\providecommand{\sub}{\subset}
\newcommand{\Dima}[1]{{{#1}}}
\newcommand{\DimaA}[1]{{{#1}}}
\newcommand{\DimaB}[1]{{#1}}
\newcommand{\DimaC}[1]{{{#1}}}
\newcommand{\Rami}[1]{{{#1}}}
\newcommand{\RamiA}[1]{{{#1}}}
\newcommand{\RamiB}[1]{{{#1}}}
\newcommand{\RamiC}[1]{{{#1}}}
\newcommand{\RamiD}[1]{{{#1}}}
\newcommand{\Eitan}[1]{{{#1}}}
\newcommand{\NO}{\mathcal N_{o}(G)}
\newcommand{\NOBM}{\mathcal I_o(G)}
\begin{document}

\title{Irreducibility of wave-front sets for depth zero cuspidal representations}

\author{Avraham Aizenbud}
\address{Avraham Aizenbud, Faculty of Mathematics and Computer Science, Weizmann
Institute of Science, POB 26, Rehovot 76100, Israel }
\email{aizenr@gmail.com}
\urladdr{http://www.aizenbud.org}

\author{Dmitry Gourevitch}
\address{Dmitry Gourevitch, Faculty of Mathematics and Computer Science, Weizmann
Institute of Science, POB 26, Rehovot 76100, Israel }
\email{dimagur@weizmann.ac.il}
\urladdr{http://www.wisdom.weizmann.ac.il/~dimagur}

\author{Eitan Sayag}
\address{Eitan Sayag,
 Department of Mathematics,
Ben Gurion University of the Negev,
P.O.B. 653,
Be'er Sheva 84105,
ISRAEL}
 \email{eitan.sayag@gmail.com}

\keywords{\DimaB{Representation, reductive group, algebraic group, nilpotent orbit,  wave-front set, character, non-commutative harmonic analysis, generalized Gelfand-Graev models.}}
\subjclass[2010]{\DimaB{20G05, 20G25, 22E35, 22E46, 20C33}}
%
%
%
%
%
%
%
%
%
\date{\today}

\maketitle
\begin{abstract}
\RamiB{We show that the results of \cite{BM, DebPar,Oka,Lus,AAu,Tay} imply a \DimaC{positive answer to the question of Moeglin-Waldspurger on wave-front sets in the case of depth zero cuspidal representations.} 
Namely, we deduce that for large enough residue characteristic, the Zariski closure of the wave-front set of any depth zero irreducible cuspidal representation of any reductive group over a non-Archimedean local field is \RamiC{an irreducible variety}.

In more details, we use \cite{BM, DebPar,Oka} to reduce the statement to an analogous statement for finite groups of Lie type, which is proven in  \cite{Lus,AAu,Tay}.}

%

\end{abstract}

\tableofcontents

\section{Introduction}

In this paper we prove the following theorem.
\begin{introthm}\label{thm:main}
For any $n\in \bN$ there exists $T\in \bN$ such that for any

\begin{itemize}
        \item prime $p>T$,
        \item local field $F$ of residue characteristic $p$ such that $val_F(p)<n$,
        \item reductive group $\bf G$ defined over $F$ such that $\dim \bfG<n$,
        \item cuspidal irreducible representation $\pi$ of depth zero of $\bfG(F)$,
\end{itemize}
 the Zariski closure of the wave-front set $\WF(\pi)$ in $\fg^*$ is irreducible, where $\fg$ denotes the Lie algebra of $\bfG$.

\end{introthm}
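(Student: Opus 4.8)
The plan is to carry out the reduction announced in the abstract: pass from the $p$-adic group to the finite reductive quotient attached to the cuspidal datum of $\pi$, transport the wave-front set along this passage, and then quote the already-established irreducibility of the finite-group wave-front set. So the three ingredients are (i) a structural description of $\pi$, (ii) a description of $\WF(\pi)$ in terms of finite-group data, and (iii) the finite-group theorem; only (i) and (ii) require work here, and (ii) is the substantive one.

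First I would invoke the structure theory of depth zero cuspidal representations (Moy--Prasad types, as used in \cite{DebPar}): such a $\pi$ is isomorphic to $\operatorname{c-Ind}_{K}^{\bfG(F)}\sigma$, where $x$ is a vertex of the Bruhat--Tits building of $\bfG$ over $F$, $K$ is the stabiliser of $x$ in $\bfG(F)$, and $\sigma$ is inflated from a cuspidal irreducible representation $\bar\sigma$ of the finite reductive quotient $\mathsf{G}_x$ (a finite group of Lie type over the residue field $\bF_q$, possibly with disconnected centre). When $p$ exceeds a bound depending only on $\dim\bfG$, the group $\mathsf{G}_x$ and its Lie algebra $\mathsf{g}_x$ are of good characteristic and their nilpotent orbits match, via reduction, the corresponding geometric data for $\fg$.

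Next I would use \cite{BM} (the local character expansion of a depth zero representation), together with DeBacker's homogeneity and the explicit computation in \cite{Oka}, to express $\WF(\pi)$ in terms of $\bar\sigma$. Concretely, for $p$ large enough there is an order-preserving ``lift'' map $\ell_x$ from nilpotent $\mathsf{G}_x$-orbits on $\mathsf{g}_x^*$ to nilpotent $\bfG$-orbits on $\fg^*$, and the set of maximal members of $\WF(\pi)$ is exactly $\ell_x$ applied to the set of maximal members of the finite-group wave-front set $\WF(\bar\sigma)$; the largeness of $p$ enters both in DeBacker's homogeneity range and in the identification $\fg\cong\fg^*$ and the exponential/Moy--Prasad correspondence. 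Cuspidality of $\pi$ is what guarantees that only the facets $\bfG(F)$-conjugate to $x$ — hence, up to conjugacy, the single finite group $\mathsf{G}_x$ — contribute to the top of the character expansion.

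Finally I would quote \cite{Lus,AAu,Tay}: once $p$ is larger than a bound depending only on the type of $\bfG$, the wave-front set $\WF(\bar\sigma)$ of an irreducible representation of a finite group of Lie type consists of a single (special) nilpotent orbit $\bar{\cO}$. Combining this with the previous step, the set of maximal members of $\WF(\pi)$ is the single orbit $\ell_x(\bar{\cO})$, so the Zariski closure of $\WF(\pi)$ is the closure of one nilpotent orbit and is therefore irreducible. Taking $T=T(n)$ to be the maximum, over the finitely many possibilities for $\bfG$, $F$ (with $\dim\bfG<n$ and $val_F(p)<n$), and $x$, of the bounds produced above yields a single constant that works. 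The main obstacle is not any single step but the \emph{uniformity}: one must check that \cite{BM,DebPar,Oka} genuinely combine into the two-sided identification of $\WF(\pi)$ with a lift of $\WF(\bar\sigma)$ (rather than a one-sided inclusion), that no further facet of the building enters the top of the expansion, and — most delicate — that each cited input carries an explicit lower bound on $p$ in terms of the type, so that all of these bounds, together with the ramification constraint $val_F(p)<n$, can be amalgamated into the single $T(n)$.
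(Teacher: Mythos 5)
Your overall route is the paper's route: realize $\pi$ as compactly induced from a maximal parahoric via Moy--Prasad, use Barbasch--Moy as refined by DeBacker and Okada to transport the wave-front set to the finite reductive quotient, and quote Lusztig/Achar--Aubert/Taylor for irreducibility there. But there is a genuine gap at the step you dispose of in one sentence: ``Cuspidality of $\pi$ is what guarantees that only the facets $\bfG(F)$-conjugate to $x$ contribute.'' Okada's formula expresses $\overline{\WF(\pi)}^{Zar}$ as the Zariski closure of a union over \emph{all} points $y$ of the building of the lifts of $\WF(\pi_y)$, where $\pi_y=\pi^{G_{y,0^+}}$, so to collapse this union to the single datum at $x$ you must actually prove two things: (a) $\pi_y=0$ unless $G_{y,0}$ is $G$-conjugate to $G_{x,0}$, and (b) $\pi_x\simeq\tau_0|_{M_x}$ (so that the finite-group representation whose wave-front set you feed into Taylor's theorem is the cuspidal one you started from, and nothing more). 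Neither follows formally from cuspidality of $\pi$ as a $G$-representation; in the paper this is exactly Proposition \ref{prop:allOr0}, proved by a Mackey-theoretic decomposition of $\pi^{G_{y,0^+}}$ over $Q_x\backslash G/G_{y,0^+}$ combined with Lemma \ref{lem:BT} (if the minimal facets of $x$ and $gy$ differ, the image of $G_{x,0}\cap G_{gy,0^+}$ in $M_x$ contains the unipotent radical of a proper parabolic), so that cuspidality of $\tau_0$ kills the corresponding invariant spaces. You yourself flag the ``two-sided identification'' as the main obstacle, but the proposal never supplies this argument, and it is precisely the new content of the paper; note also that one needs to control \emph{all} $y$ with $\pi_y\neq 0$, not merely the orbits ``at the top of the character expansion.''

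Two smaller points. First, the concluding step should be phrased as in the paper: Taylor's theorem gives that $\WF(\tau_0|_{M_x})$ is the closure of a single orbit $O$, and then the order-preserving property of the lift map (Theorem \ref{thm:cor}) shows that the lift of this closure lies in the closure of the single orbit $\cL(x,O)$, whence irreducibility; your formulation in terms of ``maximal members'' is morally this but needs the order-compatibility stated explicitly. Second, the uniformity in $p$ cannot be obtained by maximizing over ``finitely many possibilities for $F$'' (there are infinitely many fields with $val_F(p)<n$); the paper instead isolates the conditions of Definition \ref{def:acc} (Hales--Moy--Prasad for depth zero, convergence of the exponential, the Coxeter-number bound) and verifies via \cite[Lemma 3.2]{BM} and DeBacker's homogeneity results that they hold once $p>T(n)$, with bounds depending only on $n$. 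This is repairable, but as written that step is also not justified.
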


In \S \ref{sec:form} we formulate a more explicit version of this theorem.

\subsection{Idea of the proof}

The natural analogue of Theorem \ref{thm:main} for finite groups of Lie type was proven in \cite{Lus,AAu,Tay}. Barbasch and Moy \cite{BM} \RamiA{provide} a method to describe the wave-front set of a depth zero representation of $\bfG(F)$ in terms of certain \Dima{representations} of certain finite groups of Lie type. In general, the description is quite complicated, but for cuspidal representations we \RamiA{make} this description very explicit (see Corollary  \ref{cor:cusp.BM} below). This explicit description together with the result of \cite{Lus,AAu,Tay} implies Theorem \ref{thm:main}.

\subsection{Related results}
The irreducibility of the wave-front set of irreducible representations of finite groups of Lie type was conjectured (in a different language) in \cite{Kaw}. This conjecture was proven in \cite{Lus,AAu,Tay}.

For irreducible representations of p-adic groups \Rami{the irreducibility of the  wave-front set was suggested} in \cite{MW87} and proven for some cases, including all irreducible representations of $\GL_n$\Rami{, see} \cite[Chapter II]{MW87}. \Rami{In \cite{Wal18, Wal20}, the irreducibility of the  wave-front set} was proven for \Rami{many cases including} anti-tempered and tempered
unipotent representations of groups in the inner class of the split form of $\mathrm{SO}(2n+1)$. Recently, it was proven for irreducible Iwahori-spherical depth zero representations \Eitan{with a {\it real infinitesimal character}}, \RamiA{see} \cite[Theorem 1.3.1]{CMBO}.

\DimaC{Very recently, examples of irreducible representations of p-adic groups with reducible wave-front sets were given in \cite{Tsai}. 

Theorem \ref{thm:main} is independently proven in \cite[\S 2.5]{CMBO2}.
}
\RamiA{
\subsection{Acknowledgement}
We thank Dan Ciubotaru and Emile Okada for fruitful communications.
\RamiD{We thank Jeffrey Adler for answering our questions regarding the Bruhat-Tits building.
We also thank the referee for his useful remarks.}}

\section{Formulation of the main result}\label{sec:form}
Throughout the paper we fix a reductive algebraic group $\bfG$ defined over a  local non-Archimedean \RamiA{field} $F$.
\begin{notation}We denote:
\begin{itemize}
\item  $k$ -- the residue field of $F$,
\item  $\fg$  -- the  Lie algebra of $\bfG$,
\item $\bfG'$  --  the derived group of $\bfG$,
\item $\fg'$  --  the  Lie algebra of $\bfG'$,
\item  $G=\bfG(F)$,
\item  $BT(G)$  --  the Bruhat-Tits building of $\bfG$,
\item $\irr(G)$ -- the set of (isomorphism classes of) irreducible smooth representations of $G$.
\end{itemize}
\RamiA{For} any $x\in BT(G)$ and $r\in \R$ we \RamiA{further denote:}
\begin{itemize}
\item $G_{x,r}$ and  $G_{x,r^+}$ -- the Moy-Prasad subgroups (See \Rami{\cite[\S2.6]{MP94} \RamiA{where} \Dima{they are denoted by $\mathcal P_{x,r}, \mathcal P_{x,r^+}$}}).
\item $M_x:=G_{x,0}/G_{x,0^+}$.
\item  $\bfM_{x}$ -- \RamiA{the} natural reductive group defined over $k$ s.t.
$M_x\cong \bfM_{x}(k)$ (See \Rami{\cite[\S3.2]{MP94}})
\item $Q_{x}$ -- the normalizer of $G_{x,0}$ inside $G$.
\item $G_{r}:=\bigcup_{x\in BT(G)}  G_{x,r}$
\item $G_{r^+}:=\bigcup_{x\in BT(G)}  G_{x,r^+}$
\item $\fg_{x,r}, \fg_{x,r^+}, \fm_x, \fg_{r}, \fg_{r^+}$ --  the Lie algebra versions of the above \Rami{(See \cite[\S3]{MP94})}.

\end{itemize}
\end{notation}

\begin{defn}\label{def:acc}
        We say that
        \RamiA{the pair $(F,\bfG)$}
        is \emph{acceptable}, if the following conditions are satisfied:

                \begin{enumerate}
                \item \label{it:HCH} The pair $(F,\bfG)$ satisfies the Hales-Moy-Prasad conjecture \Rami{for depth $0$ representations}, {\it i.e.} for any depth $0$ representation  $\rho\in \irr(G)$, the Harish-Chandra-Howe character expansion for $\rho$ is valid on the set $G_{0^+}$ of topologically unipotent elements in $G$.

                \item \label{it:exp} The series defining the exponential map $\fg'\to \bfG$ given by the adjoint representation converge on $\fg_{0+} \cap \fg'$.

                \item \label{it:Cox} For any $x\in BT(G)$, we have  $\ch  k>3(h_x -1)$, where $h_x$ is the Coxeter number of ${\bf M}_x$. Note that in particular this implies that $p$ is a good prime for ${\bf M}_x$.
        \end{enumerate}
\end{defn}

\begin{prop}
For any $n\in \bN$ there exists $T\in \bN$ such that for any

\begin{itemize}
        \item prime $p>T$
        \item local field $F$ of residue characteristic $p$ such that $val_F(p)<n$ \item reductive group $\bf G$ defined over $F$ such that $\dim \bfG<n$,
\end{itemize}
the pair $(F,\bfG)$ is acceptable.
\end{prop}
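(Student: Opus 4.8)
The plan is to verify the three defining conditions of Definition~\ref{def:acc} one at a time, producing for each a threshold $T_i = T_i(n)$ above which it holds, and then to take $T := \max(T_1, T_2, T_3)$. The mechanism that makes this uniform is that each $T_i$ depends on the pair $(F,\bfG)$ only through the absolute root datum of $\bfG$ --- which, because $\dim\bfG < n$, ranges over a finite set of isomorphism classes --- together with the ramification index $e := val_F(p)$, which satisfies $e < n$; the maximum over this finite collection of data is then a legitimate value of $T$. Condition~\ref{it:Cox} is immediate: the Coxeter number $h_x$ of $\bfM_x$ is a function of the root system of $\bfM_x$, whose rank does not exceed that of $\bfG$, hence is $< n$; since there are finitely many root systems of rank $< n$, there is $C = C(n)$ with $h_x \le C$ for all $x \in BT(G)$, and $T_3 := 3(C-1)$ works.

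For condition~\ref{it:exp}, fix $X \in \fg_{0^+}$, say $X \in \fg_{x,0^+}$ for some $x \in BT(G)$, and set $L := \fg_{x,0}$. From the relations $[\fg_{x,a},\fg_{x,b}] \subseteq \fg_{x,a+b}$ and $\fg_{x,s+1} = \varpi\,\fg_{x,s}$ one checks that $L$ is stable under $\ad(X)$ and that $\ad(X)$ acts nilpotently on $L/\varpi L$, a $k$-vector space of dimension $d := \dim_F\fg = \dim\bfG$; by Cayley--Hamilton $\ad(X)^{d}(L) \subseteq \varpi L$, so $\ad(X)^m(L) \subseteq \varpi^{\lfloor m/d\rfloor}L$ for every $m \ge 0$. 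As $val_F(m!) = e\cdot v_p(m!) \le e\,m/(p-1) < n\,m/(p-1)$, where $v_p$ is the $p$-adic valuation normalised by $v_p(p) = 1$, the operator $\ad(X)^m/m!$ sends $L$ into $\varpi^{c_m} L$ with
\[
c_m \;\ge\; \Big\lfloor \frac{m}{d} \Big\rfloor - \frac{n\,m}{p-1} \;\ge\; m\Big(\frac1d - \frac{n}{p-1}\Big) - 1,
\]
which tends to $+\infty$ as soon as $p-1 > nd$, in particular once $p > T_2 := n^2$ (recall $d < n$). Hence the exponential series $\sum_{m\ge 0}\ad(X)^m/m!$ converges absolutely in $\End_F(\fg)$ for every $X \in \fg_{0^+}$, and in particular defines the exponential map on $\fg_{0^+}\cap\fg'$, with values in $G$ by the usual theory of the Moy--Prasad exponential.

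Condition~\ref{it:HCH} carries the genuine content. It says that the Harish-Chandra--Howe local character expansion of every depth-$0$ irreducible representation of $G$ is valid on the set $G_{0^+}$ of topologically unipotent elements --- the depth-$0$ case of the Hales--Moy--Prasad conjecture --- and this is known for $p$ large. It follows from DeBacker's homogeneity results for invariant distributions, whose hypotheses (tame ramification of a splitting field of $\bfG$; a non-degenerate $\bfG$-invariant symmetric bilinear form on $\fg$; a suitable mock exponential map) all hold once $p$ exceeds a bound determined by the absolute root datum of $\bfG$ --- for the tameness it suffices that $p$ be larger than the bounded degree of that splitting field. This produces $T_1 = T_1(n)$ such that condition~\ref{it:HCH} holds for all $p > T_1$. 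Setting $T := \max(T_1, T_2, T_3)$ finishes the proof.

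I expect condition~\ref{it:HCH} to be the only real obstacle: conditions~\ref{it:Cox} and~\ref{it:exp} reduce to elementary boundedness and $p$-adic estimates, while~\ref{it:HCH} depends on the full homogeneity theory, and the sole point of the proposition is that the bounds on $p$ occurring there may be chosen uniformly over all pairs $(F,\bfG)$ with $\dim\bfG < n$ and $val_F(p) < n$.
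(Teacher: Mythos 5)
Your proof is correct and follows the same overall route as the paper's: handle the three conditions of Definition 2.2 separately, with thresholds depending only on $n$ because the relevant data (root datum of $\bfG$, hence of each $\bfM_x$, and the ramification $val_F(p)<n$) vary over a finite/bounded range. The paper's own proof is three lines: take $T>3n$ for the Coxeter condition (using that $h_x$ is bounded in terms of $\dim\bfG<n$), take $T>n^2$ for the convergence condition by citing \cite[Lemma 3.2]{BM}, and invoke DeBacker's homogeneity results \cite{DebHom} together with condition (2) for the depth-zero Hales--Moy--Prasad condition. The only place you genuinely diverge is condition (2): instead of citing \cite{BM} you prove the estimate directly (stability of $L=\fg_{x,0}$ under $\ad(X)$, nilpotence of $\ad(X)$ on $L/\varpi L$ and Cayley--Hamilton giving $\ad(X)^m(L)\subseteq\varpi^{\lfloor m/d\rfloor}L$, against $val_F(m!)\le e\,m/(p-1)<nm/(p-1)$), recovering the same bound $p>n^2$; this makes that step self-contained, at the cost of the paper's one-line citation, and your treatment of the Coxeter bound via finiteness of root systems of bounded rank is slightly less explicit than the paper's direct choice $T>3n$ but equally valid. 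For condition (1) you, like the paper, rely on DeBacker's homogeneity theorem, correctly flagging that its hypotheses (tameness, invariant form, mock exponential) hold uniformly for $p$ large in terms of $n$, so the two arguments match there as well.
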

\begin{proof}
In order to satisfy condition \eqref{it:Cox} we take $T>3n$.
In order to satisfy condition \eqref{it:exp} we take $T>n^2$. It suffices by \cite[Lemma 3.2]{BM}. Finally, one can choose $T$ such that \eqref{it:HCH} is satisfied by \cite[\S 2.2, \S3.4, Theorem 3.5.2]{DebHom} and condition \eqref{it:exp}.
\end{proof}

\begin{defn}
For $\pi\in \irr(G)$ denote by $\WF(\pi)$ the wave-front set of the
character of $\pi$ over the point $1\in G$. It also equals
the union of the closures of all the orbits \Eitan{ $O \subset \mathfrak{g}^*(F)$} that appear with non-zero coefficients in the Harish-Chandra-Howe expansion.
\end{defn}

The following is a more explicit version of Theorem \ref{thm:main}.

\begin{thm}\label{thm:main.exp}
Assume that $(F, \bfG)$ is an acceptable pair.
Let $\pi$ be a cuspidal irreducible representation of depth zero of $\bfG(F)$. Then the Zariski closure of the wave-front set $\WF(\pi)$ in $\fg^*$ is irreducible.
\end{thm}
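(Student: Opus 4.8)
The plan is to reduce everything to the already-known statement for the finite group of Lie type attached to a vertex of the building. Recall that by Moy--Prasad theory a depth zero irreducible \emph{cuspidal} representation $\pi$ of $G$ has the form $\pi\cong\operatorname{c-Ind}_{Q_x}^{G}\widetilde\sigma$, where $x$ is a vertex of $BT(G)$, $\sigma$ is a cuspidal irreducible representation of $M_x\cong\bfM_x(k)$, and $\widetilde\sigma$ is an extension to $Q_x$ of the inflation of $\sigma$ to $G_{x,0}$. First I would invoke Corollary \ref{cor:cusp.BM}, the explicit form of the Barbasch--Moy description in the cuspidal case: using the acceptability hypotheses to fix $\Ad$-invariant nondegenerate symmetric forms identifying $\fg\cong\fg^*$ and $\fm_x\cong\fm_x^*$, it identifies $\WF(\pi)\subseteq\fg^*$ with the lift from $x$, in the sense of DeBacker's parametrization \cite{DebPar}, of the finite-group wave-front set $\WF(\sigma)\subseteq\fm_x^*$ — concretely, one takes the preimage of $\WF(\sigma)$ under the reduction map $\fg_{x,0}\to\fm_x$ and forms the associated union of nilpotent $G$-orbits.

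Next I would feed in the finite-group input. Acceptability condition \eqref{it:Cox} guarantees that $\ch k$ is large relative to $\bfM_x$ (in particular good), so the results of \cite{Lus,AAu,Tay} apply to $\bfM_x(k)$ and yield that the wave-front set of the irreducible character $\sigma$ is the Zariski closure of a single nilpotent orbit $\mathcal{O}_{\sigma}\subseteq\fm_x^*$, the wave-front orbit of $\sigma$ (obtained from its unipotent support). In particular $\WF(\sigma)=\overline{\mathcal{O}_{\sigma}}$ is an irreducible closed subvariety of the nilpotent cone of $\fm_x^*$.

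It then remains to see that the lift of $\overline{\mathcal{O}_{\sigma}}$ from $x$ is again irreducible. The reduction map $\fg_{x,0}\to\fm_x$ is surjective with fibres the cosets of $\fg_{x,0^+}$, hence the preimage of the irreducible set $\overline{\mathcal{O}_{\sigma}}$ is irreducible; and by \cite{DebPar} the pair $(x,\mathcal{O}_{\sigma})$ determines a single nilpotent $G$-orbit $\mathcal{O}\subseteq\fg^*$, the construction being compatible with passing to closures. Combining this with the previous two paragraphs gives $\WF(\pi)=\overline{\mathcal{O}}$, so its Zariski closure in $\fg^*$ is $\overline{\mathcal{O}}$, which is irreducible — which is the assertion.

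\textbf{Main obstacle.} Within this argument the genuine work sits in the two black boxes: Corollary \ref{cor:cusp.BM}, which extracts the explicit cuspidal form of the Barbasch--Moy description from \cite{BM,DebPar,Oka} (this is where acceptability conditions \eqref{it:HCH} and \eqref{it:exp} and the convergence of the exponential are really used), and the finite-group theorem \cite{Lus,AAu,Tay}. The subtle point that the present proof must still handle with care is the \emph{matching of normalizations}: one has to check that the $p$-adic wave-front set, Lusztig's ``unipotent support'', and the finite-group ``wave-front set'' of \cite{AAu,Tay} are being compared under compatible conventions (which facet, which duality on nilpotent orbits, and exactly how the coadjoint picture is transported along the building), since only after this bookkeeping does the implication ``single orbit downstairs $\Rightarrow$ single orbit upstairs'' literally deliver the irreducibility of $\overline{\WF(\pi)}$.
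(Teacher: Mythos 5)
Your proposal is correct and follows essentially the same route as the paper: realize the cuspidal depth-zero $\pi$ via Moy--Prasad induction from a maximal parahoric, invoke the cuspidal form of the Barbasch--Moy/DeBacker/Okada description (Corollary \ref{cor:cusp.BM}), feed in the finite-group irreducibility theorem (Theorem \ref{thm:lus}), and conclude using the closure-compatibility of the lift $\cL$ (Theorem \ref{thm:cor}). Two caveats: the paper's actual new work is the Mackey-theoretic Proposition \ref{prop:allOr0} that establishes Corollary \ref{cor:cusp.BM}, which you leave as a black box; and in the final step the operative mechanism is the order-preservation of $\cL$ (which you do cite from \cite{DebPar}), not the irreducibility of the preimage under $\fg_{x,0}\to\fm_x$ --- that preimage argument alone would not control which nilpotent $G$-orbits occur, and your stronger claim $\WF(\pi)=\overline{\mathcal{O}}$ is more than is needed or immediately justified for the irreducibility of $\overline{\WF(\pi)}^{Zar}$.
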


From now till the end of the paper we will assume that $(F, \bfG)$ is an acceptable pair. The rest of the paper is dedicated to the proof of this theorem.

%
%

\section{Wave-front sets and generalized Gelfand-Graev models for finite groups of Lie type}

Let $k$ be a finite field, $\bf M$ be a reductive group defined over $k$, and $M:={\bf M}(k)$ be its group of $k$-points.
We assume that  $\ch  k>3(h -1)$, where $h$ is the Coxeter number of $\bfM$. In particular, this implies that $\ch k$ is a good prime for $\bf M$. Let $\fm$ denote the  Lie algebra of $\bf M$.



\RamiA{
\begin{defn}$ $
        \begin{itemize}
                \item
For every nilpotent element $N\in \fm(k)$, let  $\Gamma_{N}$ denote the \emph{generalized Gelfand-Graev model} attached to $N$, as in \cite[\S 2.2]{BM}. Since the isomorphism class of $\Gamma_N$ only depends on the orbit of $N$ \Dima{under the adjoint action of $M$}, we will also  use the notation $\Gamma_O$
for every nilpotent orbit $O\subset \fm(k)$.
\item Let $\sigma$ be a finite-dimensional representation of $M$.
Let $GG(\sigma)$ denote the union of all nilpotent $M$-orbits $O\sub \fm(k)$ satisfying $\langle \tau, \Gamma_O\rangle\neq 0,$ where $\langle \tau, \Gamma_O\rangle$ denotes the intertwining number.
\item
Let $\WF(\sigma)$ denote the Zariski closure of $\bfM\cdot GG(\sigma)$ in $\fm$.
\end{itemize}
\end{defn}
}

\begin{thm}[{\cite[Theorem 14.10]{Tay}}]\label{thm:lus}
If $\ch k$ is a good prime for $\bf M$ then for every irreducible representation $\sigma$ of $M$, the algebraic variety $\WF(\sigma)$ is irreducible.
\end{thm}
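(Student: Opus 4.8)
The plan is to obtain the statement as a reformulation of \cite[Theorem 14.10]{Tay} (which itself rests on \cite{Lus,AAu}); the one thing that genuinely needs checking is the dictionary between the variety $\WF(\sigma)$ defined above and the \emph{wave-front set} of $\sigma$ — a single nilpotent orbit — as studied in those references. First I would record the relevant output of \cite{Lus,AAu,Tay} in the following shape: for $\ch k$ a good prime for $\bfM$ and every $\sigma\in\Irr(M)$, the set $\mathcal{S}_\sigma$ of nilpotent $M$-orbits $O\sub\fm(k)$ with $\langle\sigma,\Gamma_O\rangle\neq 0$ has a greatest element for the geometric closure order: there is $O_\sigma\in\mathcal{S}_\sigma$ with $\bfM\cdot O\sub\overline{\bfM\cdot O_\sigma}$ for all $O\in\mathcal{S}_\sigma$. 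In those papers $O_\sigma$ is moreover identified through the Springer correspondence and Lusztig's $a$-invariant, but only its existence is needed here; this existence is exactly Kawanaka's conjecture from \cite{Kaw}.

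Granting this, the theorem is a one-line deduction, which I would carry out next. By construction $GG(\sigma)=\bigcup_{O\in\mathcal{S}_\sigma}O$, so $GG(\sigma)\sub\overline{\bfM\cdot O_\sigma}$ and hence $\bfM\cdot GG(\sigma)\sub\overline{\bfM\cdot O_\sigma}$; since conversely $O_\sigma\sub GG(\sigma)$, the two Zariski closures coincide and $\WF(\sigma)=\overline{\bfM\cdot GG(\sigma)}=\overline{\bfM\cdot O_\sigma}$. The latter is the closure of a single $\bfM$-orbit — the image of the irreducible variety $\bfM$ under $g\mapsto \Ad(g)N_\sigma$ for any $N_\sigma\in O_\sigma$ — hence irreducible.

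It then remains to settle the dictionary, and here two points have to be verified. First, the sources are frequently phrased in terms of unipotent \emph{classes} of $\bfM$ and Kawanaka's generalized Gelfand--Graev \emph{representations} indexed by unipotent elements, rather than nilpotent orbits in $\fm$ and the models $\Gamma_N$ of \cite[\S2.2]{BM}; under the running hypothesis $\ch k>3(h-1)$ — much stronger than merely ``good'' — there is an $\bfM$-equivariant homeomorphism between the unipotent variety of $\bfM$ and the nilpotent cone of $\fm$ which matches the two families of modules and respects closure relations, so the two formulations agree. Second, I would make sure that the single-orbit invariant computed in the source is really the one governing the multiplicities $\langle\sigma,\Gamma_O\rangle$ (the wave-front set), and not the closely related unipotent support of Lusztig, which differs from it by Lusztig--Spaltenstein duality; again this is made explicit in \cite{Lus,AAu,Tay}.

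I expect the entire difficulty to lie in this bookkeeping rather than in any new argument: one must extract from \cite{Lus,AAu,Tay} the precise assertion that $\mathcal{S}_\sigma$ has a greatest element for the geometric closure order, and reconcile the normalizations of generalized Gelfand--Graev modules used in \cite{BM}, \cite{Kaw} and \cite{Tay} under the running good-characteristic hypotheses. Once those are in place, irreducibility is immediate from the short computation above.
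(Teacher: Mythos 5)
Your proposal is correct and matches what the paper does: the paper offers no independent argument for this theorem but simply invokes \cite[Theorem 14.10]{Tay} (building on \cite{Lus,AAu}), whose content is precisely the existence of a unique maximal orbit in the generalized Gelfand--Graev support, from which irreducibility of $\WF(\sigma)$ as the closure of a single $\bfM$-orbit follows exactly as in your one-line deduction. Your bookkeeping caveats (unipotent classes versus nilpotent orbits under the hypothesis $\ch k>3(h-1)$, and wave-front set versus unipotent support) are the right things to check and are handled in the cited sources, so there is no gap.
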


\section{The results of Barbasch-Moy}
In this section we describe the results of \cite{BM}\RamiB{, as refined in \cite{DebPar,Oka},} on the relation between wave-front sets of depth zero representations of $G$ and of representations of the \RamiA{finite} groups $M_x$ for various $x\in BT(G)$.

The results in \cite{BM} require certain assumptions, described in \cite[4.4]{BM}.
Assumptions (2) and (3) in \cite[4.4]{BM} coincide with assumptions \eqref{it:exp} and \eqref{it:Cox} in Definition \ref{def:acc}. Assumption (1) in \cite[4.4]{BM} can be replaced by assumption \eqref{it:HCH} of Definition \ref{def:acc}. Indeed, this assumption \RamiA{is} only used in \cite{BM} in order to deduce the statement of assumption \eqref{it:HCH} of Definition \ref{def:acc}.
Therefore all the results of \cite{BM} are valid for the acceptable pair $(F,\bfG)$.

\begin{defn}
Let
\begin{enumerate}[(i)]
\item
$\NO$ denote the set of nilpotent $G$-orbits in $\fg(F)$.
\item $\NOBM=\{(x,O)\, \vert  x\in BT(G),\, O \text{ is a nilpotent \RamiB{$M_x-$}orbit in }\fm_x\}.$
\item $F^{un}$ be the  unramified closure of $F$.

\item
We define a pre-order on $\NO$ in the following way:
$$\Omega\geq \Omega'\quad  \text{iff} \quad \overline{G(F^{un})\cdot \Omega}\supset \Omega',$$ where
$\overline{\DimaC{\bfG}(F^{un})\cdot \Omega}$ denotes the closure in  the local topology on $\fg(F^{un})$.
\item 
\RamiA{We define a \RamiB{pre-}order on \RamiB{$\NOBM$} in the following way:
        $$(x',O')\leq(x,O) \text{ iff } x=x' \text{ and }  \overline{\RamiB{\bfM_x \cdot} O}^{Zar}  \supset O',$$
where $\DimaC{\overline{\bfM_x \cdot O}}^{Zar}$ denotes the Zariski closure.}
%
%
\end{enumerate}
\end{defn}
\RamiB{
\begin{thm}[{\cite[Lemma 5.3.3.]{DebPar}}]
        For any $(x,O)\in \NOBM$ there exists \Eitan{a} unique $\Omega \in \NO$ s.t. there exists an $\sll_2$-triple $e,h,f\in \fg_{x,0}$ satisfying:
        \begin{itemize}
         \item $e\in \Omega$
         \item the projections $\bar e, \bar h, \bar f$  form an $\sll_2$-triple in  $\fm_x(k)=\fg_{x,0}/\fg_{x,0^+}$
         \item $\bar e\in O$.
\end{itemize}
\end{thm}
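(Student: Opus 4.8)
The plan is to realise $\Omega$ as the $G$-orbit of the nilpositive element of an $\sll_2$-triple in $\fg_{x,0}$ lifting a suitable $\sll_2$-triple in $\fm_x(k)=\fg_{x,0}/\fg_{x,0^+}$, and to treat existence and uniqueness separately, each by a successive-approximation argument along the Moy--Prasad filtration $\{\fg_{x,r}\}_{r\ge 0}$.

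For existence, I would pick any nilpotent $\bar e\in O$. Because $\ch k>3(h_x-1)$ is a good prime for $\bfM_x$ (condition \eqref{it:Cox} of acceptability), the Jacobson--Morozov theorem over $k$ provides an $\sll_2$-triple $(\bar e,\bar h,\bar f)$ in $\fm_x(k)$. Choosing arbitrary preimages $e_0,h_0,f_0\in\fg_{x,0}$, the three bracket relations hold modulo $\fg_{x,0^+}$, and I would correct the triple one filtration step at a time: the obstruction to improving agreement at the next graded level, and the ambiguity in the correction, are controlled by the groups $H^i(\sll_2,\fg_{x,r}/\fg_{x,r^+})$ for $i=1,2$, where $\sll_2$ acts through the triple $(\bar e,\bar h,\bar f)$. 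These vanish because the $\fm_x$-modules $\fg_{x,r}/\fg_{x,r^+}$ are completely reducible in this characteristic, so by Whitehead's lemmas the higher $\sll_2$-cohomology is zero. Since the filtration is separated, complete and exhaustive with finite-dimensional graded pieces, the successive corrections converge to an honest $\sll_2$-triple $(e,h,f)$ in $\fg_{x,0}$ reducing to $(\bar e,\bar h,\bar f)$. As $(e,h,f)$ is an $\sll_2$-triple and $2\in k^\times$, the element $e$ is ad-nilpotent, hence nilpotent in $\fg(F)$, and $\Omega:=G\cdot e$ satisfies the three displayed conditions.

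For uniqueness, let $(e,h,f)$ and $(e',h',f')$ be two $\sll_2$-triples in $\fg_{x,0}$ whose reductions are $\sll_2$-triples with nilpositive elements $\bar e,\bar e'\in O$; I must show $G\cdot e=G\cdot e'$. Since $\bar e$ and $\bar e'$ are $\bfM_x(k)$-conjugate and the reduction map $G_{x,0}\onto M_x=\bfM_x(k)$ is onto, applying $\Ad(g)$ for a suitable $g\in G_{x,0}\subset G$ lets me assume $\bar e=\bar e'$, without changing $G\cdot e$. By the conjugacy part of Jacobson--Morozov over $k$, all $\sll_2$-triples with nilpositive element $\bar e$ are conjugate under the (pro-)unipotent radical of the centralizer of $\bar e$ in $\bfM_x(k)$; lifting such a conjugator to $G_{x,0}$ and applying it, I may further assume that $(e,h,f)$ and $(e',h',f')$ have the same reduction modulo $\fg_{x,0^+}$, so that $e-e',\,h-h',\,f-f'\in\fg_{x,0^+}$. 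Then a second successive-approximation argument --- at each step solving for $X$ in an appropriate $\fg_{x,r}$ so that $\Ad(\exp X)$ improves the agreement of the two triples by one filtration level, the exponential converging by condition \eqref{it:exp}, and solvability being again the vanishing of $H^1(\sll_2,-)$ --- produces $g'\in G_{x,0^+}$ with $\Ad(g')(e,h,f)=(e',h',f')$. Hence $e'\in G\cdot e$ and the two orbits coincide.

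The main obstacle is the characteristic-sensitive input underlying both approximation schemes: the existence and conjugacy statements of Jacobson--Morozov over $k$, or equivalently complete reducibility of the $\fm_x$-modules $\fg_{x,r}/\fg_{x,r^+}$ and the resulting vanishing of $H^{>0}(\sll_2,\fg_{x,r}/\fg_{x,r^+})$. This is precisely what the bound $\ch k>3(h_x-1)$ in the definition of an acceptable pair is designed to secure; granting it, together with the completeness of the Moy--Prasad filtration and the convergence of $\exp$ from condition \eqref{it:exp}, the remainder is bookkeeping with successive corrections and passage to the limit.
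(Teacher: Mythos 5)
The paper does not actually prove this statement: it is quoted verbatim from \cite[Lemma 5.3.3]{DebPar}, so the only meaningful comparison is with DeBacker's proof there. Your overall architecture --- existence by lifting an $\sll_2$-triple from $\fm_x(k)$ through the Moy--Prasad filtration by successive approximation, uniqueness by first conjugating the two reductions into one another over $k$ (Jacobson--Morozov conjugacy plus surjectivity of $G_{x,0}\onto M_x$, with rational conjugacy over the finite field available by Lang's theorem) and then rigidifying two triples with equal reduction under $G_{x,0^+}$ --- is essentially the standard route and the one underlying the cited lemma; you also correctly note the non-obvious point that $e$ is nilpotent because it is the nilpositive of an $\sll_2$-triple over the characteristic-zero field $F$ (an arbitrary lift of $\bar e$ need not be nilpotent, which is exactly why one lifts the whole triple).

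The genuine weak point is the justification of the cohomological vanishing that drives both approximation schemes. Whitehead's lemmas are characteristic-zero statements: in characteristic $p$, complete reducibility of the coefficient module does \emph{not} imply $H^1=H^2=0$ (for $\sll_2$ one already has $H^1(\sll_2,L(p-2))\neq 0$ although $L(p-2)$ is simple), and even the complete reducibility of $\fg_{x,r}/\fg_{x,r^+}$ as a module over the $\sll_2$ spanned by $(\bar e,\bar h,\bar f)$ is not automatic --- both facts have to be extracted from the bound $\ch k>3(h_x-1)$ (imposed for \emph{all} $x$, in particular at hyperspecial points where $h_x$ is the Coxeter number of $\bfG$, which is what controls the $\operatorname{ad}\bar h$-weights on every graded piece, not just on $\fm_x$). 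With that bound the weights occurring are at most $2(h-1)<p-2$, the graded pieces decompose into ``standard'' $\sll_2$-modules of small highest weight, and for these the needed vanishing of the ambiguity and obstruction spaces (equivalently, the surjectivity of $\operatorname{ad}\bar e$ onto the appropriate weight spaces) holds exactly as in characteristic zero; this verification, rather than an appeal to Whitehead, is the actual content. A cleaner variant, closer to how such lifting lemmas are usually written, avoids $H^2$ altogether: lift $\bar h$ to a semisimple $h$, split $\fg_{x,0}$ into $\operatorname{ad}h$-eigenspaces (the eigenvalues are distinct modulo $p$), lift $\bar e$ inside the $2$-eigenspace, and solve $[e,f]=h$ in the $(-2)$-eigenspace by successive approximation using only surjectivity of $\operatorname{ad}e$ on each graded piece; your uniqueness step then only needs the $H^1$-type statement, with the correcting elements taken in $\fg'$ so that condition \eqref{it:exp} guarantees convergence of the exponentials. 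Once the vanishing is justified in this way, your argument is complete and agrees in substance with the proof in \cite{DebPar}.
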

\begin{notn}
        We will denote: $$\cL(x,O):=\Omega$$
\end{notn}

\begin{thm}[{\cite{BM,DebPar}}]\label{thm:cor}$\,$
         The map $\cL:\NOBM \to \NO$ is:
        \begin{enumerate}[(i)]
                \item surjective, cf. \cite[Theorem 5.6.1]{DebPar},
                \item pre-order preserving, cf.  \cite[Proposition 3.16]{BM}.
        \end{enumerate}
\end{thm}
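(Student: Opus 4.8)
The plan is to recognize that both assertions are, up to a translation of notation, already contained in the cited literature, so that the substantive task is to check that the hypotheses match. As was observed in the paragraph opening this section, the standing assumption that $(F,\bfG)$ is acceptable guarantees that assumptions (1)--(3) of \cite[4.4]{BM} hold; hence every result of \cite{BM}, and in particular \cite[Proposition 3.16]{BM}, is available, as are the refinements of \cite{DebPar,Oka}. Part (ii) then follows directly: a Zariski degeneration $O'\subset\overline{\bfM_x\cdot O}^{Zar}$ inside $\fm_x(k)$ forces $\cL(x,O')\le\cL(x,O)$ in $\NO$, i.e.\ $\overline{G(F^{un})\cdot\cL(x,O)}\supset\cL(x,O')$ in the $\fg(F^{un})$-topology; this is precisely \cite[Proposition 3.16]{BM}.

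For part (i) I would unwind the argument behind \cite[Theorem 5.6.1]{DebPar}. Given $\Omega\in\NO$, choose $e\in\Omega$. By DeBacker's building-theoretic parametrization of nilpotent orbits (the subject of \cite{DebPar}) one locates a point $x\in BT(G)$ with $e\in\fg_{x,0}$ whose reduction $\bar e\in\fm_x(k)=\fg_{x,0}/\fg_{x,0^+}$ is a nonzero nilpotent, and --- using that $\ch k$ exceeds $3(h_x-1)$, which is condition \eqref{it:Cox} of acceptability --- one can complete $e$ to an $\sll_2$-triple $e,h,f\in\fg_{x,0}$ whose reduction $\bar e,\bar h,\bar f$ is an $\sll_2$-triple in $\fm_x(k)$. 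Taking $O:=M_x\cdot\bar e$, the uniqueness clause of the theorem stated just above (namely \cite[Lemma 5.3.3]{DebPar}) forces $\cL(x,O)=\Omega$, so $\cL$ is onto. Note that the well-definedness of $\cL$ itself is not part of what must be proved here, as it is supplied by that preceding theorem.

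The hard part will be the interplay of topologies hidden in (ii): one begins with a purely algebraic closure relation over the finite residue field $k$ and must produce an analytic closure relation over the non-Archimedean field $F^{un}$. This transport is the technical heart of \cite[Proposition 3.16]{BM} (re-examined in \cite{DebPar,Oka}), and it is where the exponential-convergence hypothesis \eqref{it:exp} and the good-prime part of \eqref{it:Cox} get used, to move $\sll_2$-triples and $G_{x,0^+}$-cosets faithfully between $\fm_x(k)$ and small neighbourhoods of $0$ in $\fg$. A secondary point I would verify explicitly is that the pre-order on $\NOBM$ we have defined --- which only compares pairs sharing the same point $x$ --- is exactly the one for which \cite[Proposition 3.16]{BM} establishes monotonicity of $\cL$; since $\cL$ is typically far from injective, no statement relating different points $x$ is claimed or needed.
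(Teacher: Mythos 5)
Your proposal matches the paper's treatment: the paper offers no independent argument for this theorem, but simply cites \cite[Theorem 5.6.1]{DebPar} for surjectivity and \cite[Proposition 3.16]{BM} for monotonicity, having already noted that acceptability of $(F,\bfG)$ guarantees the hypotheses of \cite{BM} (and its refinements in \cite{DebPar,Oka}) are met — exactly the hypothesis-checking you make the substantive point. Your additional sketch of the surjectivity argument via $\sll_2$-triples and the uniqueness clause of the preceding theorem is consistent with DeBacker's construction and does not diverge from the paper's route.
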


}

\begin{notation}
For $x\in BT(G)$ and an \RamiB{$M_x$}-stable subset $\Xi\subset \fm_x$ denote \RamiB{$$\cL_x(\Xi)=\bigcup_{O\in \Xi/M_x} \cL(x,O).$$}
\end{notation}

\begin{defn}
For $\pi\in \irr(G)$ and $x\in BT(G)$, define $\pi_x:=\pi^{G_{x,0^+}}$ considered as a representation of $M_x=G_{x,0}/G_{x,0^+}$.
\end{defn}


\begin{thm}[\RamiB{{\cite[Corollary 1.2]{Oka}}\DimaA{, cf. \cite[\S 5]{BM}}}]\label{thm:BMCor}
Let $\pi$ be a representation of $G$ of depth zero. Then we have
$$\overline{\WF(\pi)}^{Zar}=\overline{\bigcup_{x\in BT(G)}\RamiB{\cL_x(\WF(\pi_x)(k))}}^{Zar},$$
\Dima{where by $\overline{\WF(\pi)}^{Zar}$ we mean the closure in the Zariski topology.}
\end{thm}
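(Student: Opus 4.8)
\textbf{Proof plan for Theorem~\ref{thm:BMCor}.}

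The plan is to derive the stated formula from the pieces already in place: the Harish--Chandra--Howe character expansion near the identity (available on $G_{0^+}$ by acceptability), the Barbasch--Moy analysis relating the restrictions $\pi_x = \pi^{G_{x,0^+}}$ to the local expansion, the refinement of the correspondence $\cL$ in Theorems~\ref{thm:cor}, and the order-theoretic bookkeeping needed to pass between $M_x$-orbits over $k$ and nilpotent $G$-orbits over $F$. Concretely, I would first recall that $\WF(\pi)$ is, by definition, the union of closures of the nilpotent $G$-orbits $\Omega$ appearing with nonzero coefficient $c_\Omega$ in the Harish--Chandra--Howe expansion of the character of $\pi$, transported to $\fg^*$ and identified with $\fg$ via an invariant form (legitimate since $p$ is large). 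So the left-hand side is $\overline{\bigcup_{c_\Omega\neq 0}\overline{\Omega}}^{Zar}$, and it suffices to show the set of $\Omega$ with $c_\Omega \neq 0$ and the set $\bigcup_{x}\cL_x(\WF(\pi_x)(k))$ have the same Zariski closure.

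Second, I would invoke the Barbasch--Moy homogeneity/descent machinery (valid for the acceptable pair by the discussion preceding Theorem~\ref{thm:cor}, which aligns assumptions (1)--(3) of \cite[4.4]{BM} with Definition~\ref{def:acc}): for each $x\in BT(G)$, the restriction $\pi_x$ of $\pi$ to $M_x$ is a finite-dimensional representation whose generalized Gelfand--Graev content detects exactly which nilpotent $M_x$-orbits $O\subset \fm_x(k)$ contribute, and these lift via $\cL(x,O)$ to nilpotent $G$-orbits that occur in the expansion of $\pi$. This is the content of \cite[\S 5]{BM}, refined by Okada \cite[Corollary 1.2]{Oka}: the precise statement is that an orbit $\Omega\in\NO$ satisfies $c_\Omega\neq 0$ and is maximal with this property exactly when $\Omega = \cL(x,O)$ for some $x$ and some $O$ maximal in $\WF(\pi_x)$; taking closures absorbs the non-maximal orbits on both sides. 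Here I would be careful that $\WF(\pi_x)$ was defined as the Zariski closure of $\bfM_x\cdot GG(\pi_x)$, and $\WF(\pi_x)(k)$ denotes its $k$-points, so $\cL_x(\WF(\pi_x)(k))$ is a union of the $\cL(x,O)$ over all $O$ in that closure; by Theorem~\ref{thm:cor}(ii) the map $\cL(x,\cdot)$ is pre-order preserving, so passing from the maximal orbits to their closure on the $\fm_x$ side lands inside the closure of the image on the $\fg$ side, and no new top-dimensional orbits are introduced.

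Third, the two inclusions. For ``$\subseteq$'': any $\Omega$ with $c_\Omega\neq 0$ is, by \cite[\S 5]{BM}/\cite[Corollary 1.2]{Oka}, dominated (in the $F^{un}$-closure pre-order on $\NO$) by some $\cL(x,O)$ with $O\in\WF(\pi_x)$, hence $\overline{\Omega}$ is contained in the Zariski closure of the right-hand side; since the Zariski closure of a finite union of $F^{un}$-closures of nilpotent orbits agrees with the Zariski closure of the union of Zariski closures (all orbits being defined over a common finite extension and the relevant closure relations being detected algebraically), we get the inclusion of closures. For ``$\supseteq$'': conversely each $\cL(x,O)$ with $O\in GG(\pi_x)$ appears in the expansion of $\pi$ by the Barbasch--Moy construction of generalized Gelfand--Graev vectors as $G_{x,0^+}$-fixed vectors, so $c_{\cL(x,O)}\neq 0$; and orbits $\cL(x,O')$ for $O'$ in the closure of such $O$ lie in $\overline{\cL(x,O)}$ by Theorem~\ref{thm:cor}(ii), hence in the left-hand side's closure. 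Taking Zariski closures of both unions gives equality.

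The main obstacle is the bookkeeping around the two different closure operations and the two different orders: $\WF(\pi)$ is built from $F^{un}$-closures of $G$-orbits in $\fg(F)$, whereas the right-hand side is assembled from Zariski closures of $M_x$-orbits over the finite field $k$ pushed forward by $\cL$; one must check that taking the outermost Zariski closure reconciles these, i.e. that $\overline{\bigcup_\Omega \overline{\Omega}^{\,loc}}^{Zar} = \overline{\bigcup_\Omega \overline{\Omega}^{Zar}}^{Zar}$ and that $\cL$ interacts correctly with going from an orbit to all $k$-points of its Zariski closure. This is precisely where \cite[Corollary 1.2]{Oka} does the real work over \cite[\S 5]{BM}, so in practice I would cite Okada's statement essentially verbatim and restrict my own contribution to verifying that the acceptability hypotheses of Definition~\ref{def:acc} imply the running assumptions of \cite{Oka,BM,DebPar}, which was already done in the paragraph introducing Theorem~\ref{thm:cor}. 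Thus the ``proof'' of Theorem~\ref{thm:BMCor} is really an assembly: translate definitions, check hypotheses, and quote \cite{Oka}.
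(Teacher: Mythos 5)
Your proposal is correct and takes essentially the same route as the paper: Theorem \ref{thm:BMCor} is not reproved there but quoted as \cite[Corollary 1.2]{Oka} (cf. \cite[\S 5]{BM}), the only local work being the verification, in the paragraph opening the Barbasch--Moy section, that acceptability (Definition \ref{def:acc}) supplies the assumptions of \cite[4.4]{BM} --- exactly the ``translate definitions, check hypotheses, and quote Okada'' assembly you conclude with. The one caveat is that your sketched ``$\supseteq$'' step overstates what \cite{BM} gives (one does not get $c_{\cL(x,O)}\neq 0$ for every $O\subset GG(\pi_x)$, only that $\cL(x,O)$ lies in the Zariski closure of some orbit occurring with nonzero coefficient), but this is immaterial since your actual argument defers to the cited result.
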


\section{Proof of Theorem \ref{thm:main}}

\RamiA{We will give an explicit version of the results of \cite{BM} for cuspidal representations of depth 0. For this we f}irst recall a construction from \RamiA{\cite{MP96}}  that exhausts all  depth zero irreducible cuspidal representations of $G$:

\begin{theorem}[{\cite[Propositions 6.6 and 6.8]{MP96}}]\label{thm:cusp.dep.0}
Let $x\in BT(G)$ s.t. $G_{x,0}$ is a maximal parahoric subgroup of $G$, and let $\tau_0\in \irr(Q_x/G_{x,0^+})$ such that $\tau_0|_{M_x}$ is a cuspidal representation. Let $\tau$ be the lift of $\tau_0$ to  $Q_x$. Then  $\pi:=\ind_{Q_x}^G\tau$ is a depth zero cuspidal irreducible representation of $G$. Moreover, any depth zero cuspidal irreducible representation of $G$ can be obtained in this way.
\end{theorem}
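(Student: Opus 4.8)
The plan is to establish the two assertions of the theorem separately: first, that $\pi=\ind_{Q_x}^G\tau$ is an irreducible cuspidal representation of depth zero; and second, that every depth-zero irreducible cuspidal representation arises this way. Throughout I would work only with Moy--Prasad filtrations in the depth-zero range, where the theory of types simplifies: a type is just (an extension of) a cuspidal representation of the finite reductive quotient $M_x=G_{x,0}/G_{x,0^+}$, inflated to $G_{x,0}$.

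For the construction direction, note that since $G_{x,0}$ is a maximal parahoric, its normalizer $Q_x$ is open in $G$ and compact modulo the center $Z$, and $\tau$ is trivial on the pro-$p$ radical $G_{x,0^+}$ with $\tau|_{M_x}$ cuspidal. I would first prove irreducibility by Mackey theory: it suffices to show $\Hom_{Q_x\cap {}^{g}Q_x}(\tau,{}^{g}\tau)=0$ for all $g\in G\setminus Q_x$. Writing such a $g$ through the affine Bruhat decomposition and using that cuspidality of $\tau_0|_{M_x}$ forces the vanishing of all Jacquet modules of $\tau_0$ along proper parabolics of $M_x$, one sees that a nonzero intertwiner can occur only when $g$ normalizes $G_{x,0}$, i.e.\ $g\in Q_x$. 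Cuspidality of $\pi$ then follows because $Q_x$ is compact modulo $Z$, so the matrix coefficients of $\ind_{Q_x}^G\tau$ are compactly supported modulo $Z$; and $\pi$ has depth zero since the function supported on $Q_x$ is $G_{x,0^+}$-fixed, while the depth cannot be negative.

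For the exhaustion direction, let $\pi$ be a depth-zero irreducible cuspidal representation. By definition of depth zero there is $y\in BT(G)$ with $V:=\pi^{G_{y,0^+}}\neq 0$, which I would view as a representation of the finite group $M_y$. The key step is to argue that, after replacing $y$ by a suitable vertex in the closure of its facet, $G_{y,0}$ becomes a maximal parahoric and $V$ contains a \emph{cuspidal} constituent $\sigma$ of $M_y$: if $V$ had no cuspidal constituent, every constituent would embed in a representation parabolically induced from a proper parabolic $\bar P\subset M_y$, which lifts to a proper parahoric and hence, by the compatibility of parabolic restriction for $M_y$ with the Jacquet functors of $G$, would produce a nonzero Jacquet module of $\pi$ along a proper parabolic of $\bfG$ --- contradicting cuspidality. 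Inflating $\sigma$ to $G_{y,0}$, one checks that $\sigma$ is stable under $Q_y$-conjugation (conjugation fixes $G_{y,0}$ and permutes cuspidal constituents, and a Clifford/Frobenius-reciprocity argument upgrades this to genuine stability), so after fixing a compatible central character $\sigma$ extends to a representation $\tau$ of $Q_y$; cuspidality of $\pi$ together with Frobenius reciprocity then forces $\pi\cong\ind_{Q_y}^G\tau$, and setting $\tau_0=\tau|_{Q_y/G_{y,0^+}}$ completes the argument.

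The hard part will be the exhaustion direction, and within it the step of locating the correct point $x$: one must show that a depth-zero cuspidal representation carries an unrefined minimal $K$-type supported at a point whose associated parahoric is \emph{maximal}, and that the finite-group representation there is truly cuspidal rather than merely a constituent of an induced one. The delicate bookkeeping is the interaction between the Bruhat--Tits building combinatorics (facets, vertices, the poset of parahorics and their links) and the vanishing of Jacquet modules, together with the central-character matching needed to extend $\sigma$ from $G_{x,0}$ past $Z\cdot G_{x,0}$ to the full normalizer $Q_x$. By contrast, the construction direction is comparatively formal once the Mackey-theoretic vanishing of intertwiners has been set up.
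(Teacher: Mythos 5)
The paper does not actually prove this statement: it is quoted verbatim from Moy--Prasad \cite{MP96} (Propositions 6.6 and 6.8), so there is no internal proof to compare against. Your sketch is essentially a reconstruction of the standard Moy--Prasad/Morris argument: Mackey theory plus the geometric fact that for $g\notin Q_x$ the intersection $G_{x,0}\cap{}^{g}G_{x,0^+}$ surjects onto the unipotent radical of a proper parabolic of $M_x$ (this is exactly the ingredient the paper isolates as Lemma \ref{lem:BT}, itself credited to the proof of \cite[Proposition 6.6]{MP96}), compact-mod-center support of matrix coefficients for cuspidality, and the compatibility of $(\cdot)^{G_{y,0^+}}$ with Jacquet functors to force cuspidal constituents at a vertex in the exhaustion step. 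That overall route is the correct one. Two standard points are left implicit but are genuinely needed: vanishing of intertwiners only gives $\End_G(\ind_{Q_x}^G\tau)=\C$, and one must add the usual argument (compactly supported matrix coefficients modulo the center, hence every irreducible quotient is supercuspidal, hence projectivity/injectivity in the category with fixed central character) to upgrade this to irreducibility.

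There is, however, one concrete misstep in your exhaustion direction: the claim that the cuspidal constituent $\sigma$ of $\pi^{G_{x,0^+}}$ is stable under $Q_x$-conjugation and therefore extends to $Q_x$. No Clifford or Frobenius-reciprocity argument ``upgrades'' this: in general $Q_x/ZG_{x,0}$ is a nontrivial finite group which can act on $M_x$ by outer automorphisms and permute several pairwise non-isomorphic cuspidal representations, and $\sigma$ need not be fixed. The correct (and standard) move is the opposite one: choose any irreducible constituent $\tau$ of $\pi|_{Q_x}$ lying over $\sigma$ (it exists since $\pi$ is admissible with a central character and $Q_x/ZG_{x,0}$ is finite); because $G_{x,0^+}$ is normal in $Q_x$, Clifford theory shows $\tau|_{G_{x,0}}$ is a direct sum of $Q_x$-conjugates of $\sigma$, all trivial on $G_{x,0^+}$, so $\tau$ factors through $Q_x/G_{x,0^+}$ and its restriction to $M_x$ is cuspidal though possibly reducible --- which is all the statement requires, since $\tau_0|_{M_x}$ is only asked to be cuspidal, not irreducible. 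Frobenius reciprocity for compact induction together with irreducibility of $\ind_{Q_x}^G\tau$ then gives $\pi\cong\ind_{Q_x}^G\tau$ as you intended. With this correction your outline matches the argument in \cite{MP96}.
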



\begin{prop}\label{prop:allOr0}
        Let $x\in BT(G),\tau_0 \in  \irr(Q_x/G_{x,0^+}),$ its lift  $\tau \in \irr(Q_x)$ and $\pi=\ind_{Q_{x}}^G\tau\in \irr(G)$ be as in Theorem \ref{thm:cusp.dep.0},
        and let $y\in BT(G)$. Then
        \begin{enumerate}[(i)]
                \item \label{it:0}If $\pi_y\neq 0$ then there exists $g\in G$ such that $gG_{x,0}g^{-1}=G_{y,0}$.
                \item \label{it:Norm}$\pi_x\simeq (\tau_0)|_{M_x}.$
        \end{enumerate}
\end{prop}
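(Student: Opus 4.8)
The proof rests on understanding the representation $\pi = \ind_{Q_x}^G \tau$ via Mackey-type analysis of its $G_{y,0^+}$-invariants. For part \eqref{it:0}, I would use the decomposition of $\pi|_{G_{y,0}}$ obtained by restricting the induced representation: by the standard (smooth) Mackey formula, $\pi^{G_{y,0^+}}$ decomposes over the double cosets $Q_x \backslash G / G_{y,0^+}$, and a double coset $Q_x g G_{y,0^+}$ contributes a nonzero space only if $\tau$ has a nonzero $g G_{y,0^+} g^{-1} \cap Q_x$-fixed (suitably twisted) vector. Since $\tau$ is inflated from $Q_x/G_{x,0^+}$ and $\tau_0|_{M_x}$ is cuspidal, such a vector can exist only if the image of $g G_{y,0^+} g^{-1} \cap G_{x,0}$ in $M_x$ is contained in the unipotent radical of a proper parabolic — cuspidality forces this image to be trivial, i.e.\ $g G_{y,0^+} g^{-1} \cap G_{x,0} \subseteq G_{x,0^+}$. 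A standard Bruhat--Tits argument (comparing Moy--Prasad filtrations along the geodesic between $x$ and $g^{-1}y$, as in \cite{MP94, MP96}) then shows this containment of a pro-$p$ Iwahori-type group inside $G_{x,0^+}$ can only happen when $G_{x,0}$ is conjugate to $G_{y,0}$; here maximality of the parahoric $G_{x,0}$ is essential, since it rules out $G_{y,0}$ being strictly larger.

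For part \eqref{it:Norm}, I would take $g = e$ in the Mackey decomposition, i.e.\ analyze the contribution of the identity double coset to $\pi^{G_{x,0^+}}$, and then argue that it is the only contributing coset. By Frobenius reciprocity / the Mackey description, the identity double coset contributes $\tau^{G_{x,0^+} \cap Q_x} = \tau^{G_{x,0^+}} = \tau_0|_{M_x}$ (using that $G_{x,0^+}\subseteq Q_x$ and that $\tau$ is inflated from $Q_x/G_{x,0^+}$, acting on the full space of $\tau$). To see no other double coset $Q_x g G_{x,0^+}$ contributes, one reruns the cuspidality argument from part \eqref{it:0} with $y = x$: a nonzero contribution forces $g G_{x,0^+} g^{-1} \cap G_{x,0} \subseteq G_{x,0^+}$, and combined with $g$ normalizing $G_{x,0}$ up to conjugacy — plus the fact that $Q_x$ is the full normalizer of $G_{x,0}$ — this pins $g$ down to $Q_x$, so only the trivial coset survives. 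One must also check that the $M_x$-action induced on this space is the natural one, which is immediate from the construction of the inflation.

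The main obstacle is the careful Bruhat--Tits bookkeeping in part \eqref{it:0}: translating ``$\tau_0|_{M_x}$ cuspidal'' into a clean statement about which $G_{y,0^+}$-conjugates can sit inside the parahoric $G_{x,0}$ with small image, and then leveraging maximality of $G_{x,0}$ to conclude $G_{x,0}$ and $G_{y,0}$ are $G$-conjugate. The cuspidality input is exactly what forces the relevant intersection to land in the pro-unipotent radical $G_{x,0^+}$ rather than merely in a proper parabolic; making this rigorous requires comparing the reductive quotients at $x$ and at points near $y$, and identifying the image of $G_{y,0^+} \cap G_{x,0}$ in $M_x$ with a unipotent subgroup coming from the relative position of the two facets. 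Everything else — the Mackey formula for smooth induction to a compact-mod-center situation, Frobenius reciprocity, and the identification $\pi^{G_{x,0^+}} = \tau_0|_{M_x}$ for the trivial coset — is routine once this geometric step is in place.
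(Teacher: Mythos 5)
Your overall strategy is the same as the paper's: decompose $\pi^{G_{y,0^+}}$ by Mackey theory over the double cosets $Q_x\backslash G/G_{y,0^+}$, use cuspidality of $\tau_0$ to kill every coset whose associated facet differs from $F_x$, and for part \eqref{it:Norm} pin the surviving cosets down to $Q_x$ so that only $\tau^{G_{x,0^+}}=\tau_0|_{M_x}$ remains. However, the crucial cuspidality step is stated backwards, and as written it would fail. You claim that a nonzero fixed vector can exist only if the image of $gG_{y,0^+}g^{-1}\cap G_{x,0}$ in $M_x$ is \emph{contained in} the unipotent radical of a proper parabolic, and that ``cuspidality forces this image to be trivial.'' Cuspidality of $\tau_0$ only asserts the vanishing of invariants under the \emph{full} unipotent radical of a proper parabolic of $\bfM_x(k)$; it gives no information about invariants under smaller (e.g.\ unipotent but proper) subgroups, so it can neither force the image to be trivial nor make ``contained in a unipotent radical'' the relevant condition. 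The statement actually needed is the opposite containment, which the paper isolates as Lemma \ref{lem:BT} (extracted from the proof of \cite[Proposition 6.6]{MP96}): if $F_{gy}\neq F_x$, then the image of $G_{x,0}\cap G_{gy,0^+}$ in $M_x$ \emph{contains} the unipotent radical of a proper parabolic. Cuspidality then annihilates the corresponding invariants, so every surviving coset has $F_{gy}=F_x$, which immediately gives $gG_{y,0}g^{-1}=G_{gy,0}=G_{x,0}$ --- no separate maximality-of-parahoric argument is needed for \eqref{it:0}. Your later description of the geometric step (identifying the image with ``a unipotent subgroup coming from the relative position'') is still too weak for the same reason: being unipotent is not enough, it must contain a full unipotent radical of a proper parabolic for cuspidality to bite.

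A secondary point concerns part \eqref{it:Norm}: decomposing over $Q_x\backslash G/G_{x,0^+}$ only gives a vector-space decomposition, since $G_{x,0}$ permutes these cosets. To obtain $\pi_x\simeq\tau_0|_{M_x}$ as a representation of $M_x$, the paper first decomposes $G_{x,0}$-equivariantly over $Q_x\backslash G/G_{x,0}$ and only then refines each piece along $G_{x,0^+}$-cosets to detect vanishing via Lemma \ref{lem:BT} and cuspidality; the conclusion that a nonvanishing piece forces $g\in Q_x$ (using that $Q_x$ is the normalizer of $G_{x,0}$) and the computation $\tau^{G_{x,0^+}}=\tau_0|_{M_x}$ then match what you propose. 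So the skeleton of your plan is right, but to be a proof it needs the precise Bruhat--Tits lemma with the containment in the correct direction, and the equivariance bookkeeping in \eqref{it:Norm}.
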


For the proof we will need the following lemma.

\begin{lem}
        \label{lem:BT}
Let $x,y\in BT(G)$, and let $F_x$ and $F_y$ denote the minimal faces that include them. \RamiD{Assume that $F_x$ is a face of minimal dimension.}
If $F_x\neq F_y$ then the image of  $G_{x,0}\cap G_{y,0^+}$ in $G_{x,0}\slash G_{x,0^+}$ includes the unipotent radical of a proper parabolic subgroup of ${\bf M}_x(k)$.
\end{lem}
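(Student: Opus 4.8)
The plan is to prove Lemma~\ref{lem:BT} by the standard Bruhat--Tits dictionary between subgroups of the form $G_{z,0}$, $G_{z,0^+}$ and the combinatorics of the building, following the argument implicit in the proof of \cite[Proposition 6.6]{MP96}. First I would reduce to the case where both $x$ and $y$ lie in a common apartment $\mathcal A$: since $G$ acts transitively on apartments containing a given face, and the statement only concerns the relative position of the two minimal faces $F_x$ and $F_y$, we may conjugate so that $F_x, F_y \subset \mathcal A$. Fix the corresponding maximal split torus and the affine root system on $\mathcal A$; then for each affine root $\psi$ we have the affine root subgroup $U_\psi$, and $G_{z,0}$ (resp.\ $G_{z,0^+}$) is generated by the $U_\psi$ with $\psi(z)\ge 0$ (resp.\ $\psi(z)>0$) together with the bounded part of the torus. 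The finite reductive quotient $\bfM_x(k) = G_{x,0}/G_{x,0^+}$ has root subgroups indexed by those affine roots $\psi$ that vanish on $F_x$, i.e.\ $\psi(x)=0$.

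The key computation is then to identify the image of $G_{x,0}\cap G_{y,0^+}$ in $\bfM_x(k)$ in terms of affine roots. An affine root subgroup $U_\psi$ contributes to $G_{x,0}\cap G_{y,0^+}$ precisely when $\psi(x)\ge 0$ and $\psi(y)>0$; its image in $\bfM_x(k)$ is nontrivial only when moreover $\psi(x)=0$. So the relevant set of affine roots is $\Psi := \{\psi : \psi(x)=0,\ \psi(y)>0\}$, and we must show that the root subgroups of $\bfM_x(k)$ indexed by the (finite, i.e.\ gradient) roots arising from $\Psi$ generate the unipotent radical of a \emph{proper} parabolic of $\bfM_x(k)$. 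Let $v$ be a point on the open face $F_y$ (or a generic point of the segment from $x$ toward $F_y$); then $\bar\psi \mapsto$ "sign of $\psi(v)$" defines a linear functional on the root datum of $\bfM_x$, hence cuts out a parabolic subgroup $P$ of $\bfM_x(k)$ whose unipotent radical is generated exactly by the $U_{\bar\psi}$ with $\psi(x)=0$, $\psi(v)>0$, which is the group in question. Properness is where the hypothesis $F_x\ne F_y$ enters: if this parabolic were all of $\bfM_x(k)$ (equivalently, its "Levi part" were a torus, equivalently no affine root vanishing at $x$ also vanishes at $v$), one argues that $x$ and $v$ lie on the same open face, forcing $F_x = F_y$ and contradicting the hypothesis; more precisely, $F_x$ is the intersection of the affine-root hyperplanes through $x$, and $F_x = F_y$ iff every such hyperplane also passes through $F_y$, iff no $\psi$ with $\psi(x)=0$ has $\psi(v)\ne 0$. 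Since $F_x \ne F_y$, some affine root $\psi_0$ vanishes on $F_x$ but not identically on $F_y$; replacing $\psi_0$ by $-\psi_0$ if necessary to make $\psi_0(v)>0$ is not quite enough — we actually get both $\psi_0$ and $-\psi_0$ among the gradient roots spanning a Levi of $P$ unless $\psi_0(v)>0$ strictly, so the Levi of $P$ is strictly smaller than $\bfM_x(k)$, i.e.\ $P$ is proper, which is the claim.

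The main obstacle I anticipate is handling the affine versus finite (gradient) root bookkeeping carefully: one has to be sure that the image in $\bfM_x(k) = G_{x,0}/G_{x,0^+}$ of the product of the $U_\psi$ for $\psi \in \Psi$ really is the \emph{whole} unipotent radical of $P$ and not merely a subgroup of it, which requires knowing that distinct affine roots with the same gradient and both positive at $v$ and zero at $x$ collapse to the same finite root subgroup, together with the fact that the unipotent radical of a parabolic of a finite group of Lie type in good characteristic is generated by the root subgroups for roots in the corresponding "positive" subset — this is where the running good-characteristic hypothesis on $k$ is used, and one should also make sure there is no $p$-torsion subtlety from higher Moy--Prasad filtration steps contributing to $G_{x,0^+}$. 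A second, more minor, point is the reduction to a common apartment when $x$ and $y$ are far apart in the building: one uses that any two points lie in a common apartment, together with $G$-conjugation, but one must check this conjugation does not disturb the hypothesis $F_x \ne F_y$, which is immediate since it is a $G$-equivariant condition. Modulo these technical verifications the lemma follows directly from the structure theory of \cite{MP94,MP96}.
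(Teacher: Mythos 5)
The paper itself gives no argument for this lemma (it just points to the proof of \cite[Proposition 6.6]{MP96}), so your proposal stands on its own. Its core is the right, standard argument: put $x$ and $y$ in a common apartment (no conjugation needed --- any two points lie in a common apartment by the building axioms), observe that for every affine root $\psi$ with $\psi(x)=0$ and $\psi(y)>0$ the group $U_\psi$ lies in $G_{x,0}\cap G_{y,0^+}$ and surjects onto the root subgroup $U_{\bar\psi}(k)$ of $\bfM_x$, and note that for such $\psi$ one has $\psi(y)=\bar\psi(y-x)$, so these $\bar\psi$ are precisely the roots of $\bfM_x$ positive on the direction $y-x$; they directly span the unipotent radical of the dynamic parabolic $P(y-x)$. (Your worry about good characteristic here is unnecessary: the dynamic description of a parabolic and its unipotent radical works over any field.)

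The genuine gap is the properness step. You assert ``$F_x=F_y$ iff no $\psi$ with $\psi(x)=0$ has $\psi(v)\neq 0$''. That is false: the right-hand condition only says that $y$ lies in the \emph{affine span} of $F_x$ (the intersection of the affine root hyperplanes through $x$), not in $F_x$ itself; equality of facets also requires matching the signs of all non-vanishing affine roots. Hence ``$F_x\neq F_y\Rightarrow P(y-x)$ proper'' does not follow, and in fact no argument can close this in the stated generality: for $\SL_3$, take $x$ and $y$ interior to two \emph{different} edges lying on the same affine root hyperplane; then every affine root vanishing at $x$ vanishes at $y$, $P(y-x)=\bfM_x$, and the directly-spanned description of $G_{x,0}\cap G_{y,0^+}$ shows its image in $M_x$ is trivial, while $\bfM_x$ has semisimple rank $1$, so the asserted conclusion fails (similarly, if $x$ and $y$ lie in two distinct chambers, $\bfM_x$ is a torus and has no proper parabolic at all). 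What your computation does prove is the correct statement: if some affine root vanishing at $x$ is nonzero at $y$ (equivalently, $y$ is not in the affine span of $F_x$), then the image of $G_{x,0}\cap G_{y,0^+}$ in $M_x$ contains the unipotent radical of the proper parabolic $P(y-x)$. That version suffices for the paper, since the lemma is only applied with $G_{x,0}$ a maximal parahoric, so that $F_x$ is a vertex whose affine span (modulo central directions) meets no other facet, and then $F_x\neq F_y$ really does produce such a $\psi$. So either add that hypothesis or restrict to the maximal-parahoric case; with that amendment, and the standard Bruhat--Tits fact that intersections of Moy--Prasad groups attached to points of one apartment are directly spanned (only the easy containment is actually needed), your argument is essentially the one implicit in Moy--Prasad.
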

\RamiD{
This Lemma is well known, however, we could not find an exact reference. Thus,  for completeness,  we deduce it here from  what we found in the literature.     
\begin{proof}
        By passing to an unramified extension of $F$ we can and will assume that $\bfG$ is quasi-split. Passing to a cover of $\bfG$ we may further assume that $\bfG$ is a product of a simply connected group and a torus. Since the torus component is inessential for our claim, we may ignore it and assume that $\bfG$ is simply connected. We recall that in this case the parahoric subgroup $G_{x,0}$ is just the stabilizer of  $x$.
        
        By
        \cite[Corollary 3.24]{Rab}
         there is a parabolic $\bfP\subset {\bf M}_x$ such that $G_{x,0}\cap G_{y,0}=\bfP(k)$ and  
         $G_{x,0}\cap G_{y,0+}=\mathbf{U}(k)$, 
         where $\mathbf U\subset \bfP$ is the unipotent radical.\footnote{\cite{Rab} consider only the case that $G$ is split,
                 but the proof of this statement (as well as the other statements from \cite{Rab} that we use) does not use require this assumption.}      
         It is left to show that $\bfP$ is a proper parabolic. 
         
         Choose an apartment $A\subset BT(G)$ that contains the points $x$ and $y$. Connect these points by a segment $I\subset A$. Note that the intersection  $G_{x,0}\cap G_{y,0}$ fixes $I$. So for any $z\in I$ we have $G_{x,0}\cap G_{z,0}\supset G_{x,0}\cap G_{y,0}$. Therefore without loss of generality we may and will assume that $x\in \bar F_y$.
         The statement in this case follows from \cite[Proposition 3.22]{Rab}.
         
\end{proof}
}


\begin{proof}[Proof of Proposition \ref{prop:allOr0}]
We have the following isomorphisms of vector spaces.
\begin{multline*}\pi_y\simeq\bigoplus_{[g]\in Q_x\backslash G \slash G_{y,0^+}} (\Ind^{G_{y,0^+}}_{G_{y,0^+}\cap g^{-1}Q_x g}(\tau|_{gG_{y,0^+}g^{-1}\cap Q_x})^g)^{G_{y,0^+}}\simeq\\
\bigoplus_{[g]\in Q_x\backslash G \slash G_{y,0^+}} (\Ind^{G_{y,0^+}}_{G_{y,0^+}\cap Q_{g^{-1}x}}(\tau|_{G_{gy,0^+}\cap Q_x})^g)^{G_{y,0^+}}\simeq
\bigoplus_{[g]\in Q_x\backslash G \slash G_{y,0^+}} ((\tau|_{G_{gy,0^+}\cap Q_x})^g)^{G_{y,0^+}\cap Q_{g^{-1}x}}\simeq\\
\bigoplus_{[g]\in Q_x\backslash G \slash G_{y,0^+}} \tau^{G_{gy,0^+}\cap Q_x}
\end{multline*}
By Lemma \ref{lem:BT} and the cuspidality of $\tau_0$ we obtain
$$\pi_y\simeq \bigoplus_{[g]\in Q_x\backslash G \slash G_{y,0^+} \text{ s.t. }F_{gy}=F_x} \tau^{G_{x,0^+}}\simeq\bigoplus_{[g]\in Q_x\backslash G \slash G_{y,0^+} \text{ s.t. }F_{gy}=F_x} \tau_0.$$ This proves \eqref{it:0}.
To prove \eqref{it:Norm} we use the following isomorphism of representations of $G_{x,0}$.
 \begin{multline}\label{=pix}
 \pi_x\simeq\bigoplus_{[g]\in Q_x\backslash G \slash G_{x,0}} (\Ind^{G_{x,0}}_{G_{x,0}\cap g^{-1}Q_xg}(\tau|_{gG_{x,0}g^{-1}\cap Q_x})^g)^{G_{x,0^+}}\cong \\
\bigoplus_{[g]\in Q_x\backslash G \slash G_{x,0}} (\Ind^{G_{x,0}}_{G_{x,0}\cap Q_{g^{-1}x}}(\tau|_{G_{gx,0}\cap Q_x})^g)^{G_{x,0^+}}
\end{multline}
For any $g\in G$ we have a vector space isomorphism:
\begin{multline*}(\Ind^{G_{x,0}}_{G_{x,0}\cap Q_{g^{-1}x}}(\tau|_{G_{gx,0}\cap Q_x})^g)^{G_{x,0^+}}\cong
\bigoplus_{[h]\in Q_x\backslash Q_xgG_{x,0} \slash G_{x,0^+}} (\Ind^{G_{x,0^+}}_{G_{x,0^+}\cap Q_{h^{-1}x}}(\tau|_{G_{hx,0^+}\cap Q_{x}})^h)^{G_{x,0^+}}\cong\\
\bigoplus_{[h]\in Q_x\backslash Q_xgG_{x,0} \slash G_{x,0^+}} ((\tau|_{G_{hx,0^+}\cap Q_{x}})^{h})^{G_{x,0^+}\cap Q_{h^{-1}x}}
\cong
\bigoplus_{[h]\in Q_x\backslash Q_xgG_{x,0} \slash G_{x,0^+}} \tau^{G_{hx,0^+}\cap Q_{x}}
\end{multline*}
By Lemma \ref{lem:BT} and the cuspidality of $\tau$, if the space above does not vanish then for some  $h\in Q_{x}gG_{x,0}$ we have  $F_x=F_{hx}$. In other words $ Q_{x}gG_{x,0}$ intersects $Q_{x}$, and thus $g\in Q_x$. To sum up, if $(\Ind^{G_{x,0}}_{G_{x,0}\cap Q_{g^{-1}x}}(\tau|_{G_{gx,0}\cap Q_x})^g)^{G_{x,0^+}}\neq 0$ then $g\in Q_x$. Using \eqref{=pix} we obtain
 $$\pi_x\simeq (\tau|_{G_{x,0}})^{G_{x,0^+}}=(\tau_0)|_{M_x}$$
\end{proof}

\Eitan{
\begin{remark}
Proposition \ref{prop:allOr0} can be deduced from the main result of \cite{Lath}. We thank the referee for informing us on this reference.
\end{remark}

}

 Proposition \ref{prop:allOr0} and Theorem \ref{thm:BMCor} imply the following corollary.
\begin{cor} \label{cor:cusp.BM}
        Let $x\in BT(G),\tau_0 \in  \irr(Q_x/G_{x,0^+}),$ its lift  $\tau \in \irr(Q_x)$ and $\pi=\ind_{Q_{x}}^G\tau\in \irr(G)$ be as in Theorem \ref{thm:cusp.dep.0},
Then $$\overline{\WF(\pi)}^{Zar}=\overline{\RamiB{\cL_x}(\WF(\tau_0))}^{Zar}.$$
\end{cor}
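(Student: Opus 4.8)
The plan is to combine Theorem~\ref{thm:BMCor} with Proposition~\ref{prop:allOr0}, reducing the union over all points $y\in BT(G)$ on the right-hand side of Theorem~\ref{thm:BMCor} to the single contribution coming from $x$. First I would write down the conclusion of Theorem~\ref{thm:BMCor} applied to $\pi=\ind_{Q_x}^G\tau$:
$$\overline{\WF(\pi)}^{Zar}=\overline{\bigcup_{y\in BT(G)}\cL_y(\WF(\pi_y)(k))}^{Zar}.$$
By Proposition~\ref{prop:allOr0}\eqref{it:0}, the summand indexed by $y$ is empty (i.e.\ $\pi_y=0$, hence $\WF(\pi_y)=\emptyset$ and $\cL_y(\WF(\pi_y)(k))=\emptyset$) unless $G_{y,0}$ is conjugate to $G_{x,0}$, say $G_{y,0}=gG_{x,0}g^{-1}$ for some $g\in G$. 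So the union may be restricted to such $y$.

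Next I would use $G$-equivariance to see that each nonzero summand contributes the same set (inside $\fg^*$, after the $G$-action implicit in forming $\WF(\pi)$) as the summand at $x$ itself. Concretely, conjugation by $g$ carries $(M_y,\fm_y)$ to $(M_x,\fm_x)$ identifying $\pi_y$ with $\pi_x$ as representations (both being built from the same $\tau$), and the Barbasch--Moy correspondence $\cL$ is compatible with this identification; since $\overline{\WF(\pi)}^{Zar}$ is by definition $G$-stable, the image of $\cL_y(\WF(\pi_y)(k))$ coincides with that of $\cL_x(\WF(\pi_x)(k))$ after taking the $G$-saturation and Zariski closure. Therefore the entire right-hand side collapses to $\overline{\cL_x(\WF(\pi_x)(k))}^{Zar}$. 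Finally, by Proposition~\ref{prop:allOr0}\eqref{it:Norm} we have $\pi_x\simeq(\tau_0)|_{M_x}$, so $\WF(\pi_x)=\WF((\tau_0)|_{M_x})$; and since $\WF(\sigma)$ for a representation $\sigma$ of $M_x$ depends only on $\sigma|_{M_x}$ (the generalized Gelfand--Graev models $\Gamma_O$ are representations of $M_x$ and the intertwining numbers $\langle\tau_0,\Gamma_O\rangle$ are computed after restriction to $M_x$), we may replace $\WF(\pi_x)$ by $\WF(\tau_0)$ in the notation of Corollary~\ref{cor:cusp.BM}. This yields $\overline{\WF(\pi)}^{Zar}=\overline{\cL_x(\WF(\tau_0))}^{Zar}$.

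I expect the main obstacle to be the bookkeeping in the middle step: making precise that the contributions from conjugate vertices $y$ genuinely coincide with the contribution from $x$ once one takes the $G$-saturation and Zariski closure. One must check that conjugation by $g\in G$ intertwines the Moy--Prasad data at $x$ and $y$ in a way compatible both with the formation of $\pi_y$ from $\tau$ and with the map $\cL_\bullet$, and that the resulting sets in $\fg^*$ differ only by the $G$-action that is already built into the definition of $\WF(\pi)$. This is essentially a naturality/equivariance statement for the Barbasch--Moy construction; once it is stated carefully it should follow formally, but it is the place where care is needed. The remaining ingredients---vanishing of $\pi_y$ for non-conjugate $y$ and the identification $\pi_x\simeq(\tau_0)|_{M_x}$---are exactly Proposition~\ref{prop:allOr0}, and the reduction of $\WF$ to the restriction to $M_x$ is immediate from the definitions.
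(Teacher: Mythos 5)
Your proposal is correct and follows essentially the same route as the paper, which simply states that Proposition \ref{prop:allOr0} and Theorem \ref{thm:BMCor} imply the corollary: restrict the union in Theorem \ref{thm:BMCor} to the points $y$ surviving by Proposition \ref{prop:allOr0}\eqref{it:0}, collapse the conjugate contributions, and identify $\pi_x$ with $(\tau_0)|_{M_x}$ via Proposition \ref{prop:allOr0}\eqref{it:Norm}. The equivariance step you flag is indeed the only detail the paper leaves implicit, and it is formal: $G_{y,0}=gG_{x,0}g^{-1}$ identifies $(M_y,\fm_y,\pi_y)$ with $(M_x,\fm_x,\pi_x)$ via $\Ad(g)$, and $\cL(gx,\Ad(g)O)=\cL(x,O)$ because conjugating the defining $\sll_2$-triple does not change the $G$-orbit $\Omega$, so each conjugate contribution literally equals $\cL_x(\WF(\pi_x))$.
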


In view of
Theorem \ref{thm:cusp.dep.0},
this corollary, together with Theorem \ref{thm:cor} and Theorem \ref{thm:lus}, imply Theorem \ref{thm:main.exp}.
\bibliographystyle{alpha}
\bibliography{Ramibib}

\end{document}